\documentclass[11pt]{article}
\usepackage{amsmath,amssymb,amsthm}
\usepackage{algorithm,algorithmic}
\usepackage{graphicx}
\usepackage[colorlinks=true,linkcolor=blue,citecolor=blue,urlcolor=blue]{hyperref}
\usepackage{color}
\usepackage{enumerate}
\usepackage{placeins}
\usepackage{geometry}
\newtheorem{theorem}{Theorem}
\newtheorem{lemma}{Lemma}
\newtheorem{corollary}{Corollary}

\newtheorem{remark}{Remark}
\newtheorem{proposition}{Proposition}

\title{Update-Magnitude State Redistribution (UM-SRD):\\
A Shut-off Extension of Weighted SRD for Cut-Cell Methods}
\author{Justo E.\ Karell \\ Department of Computer Science \\ Stevens Institute of Technology \\ Hoboken, NJ, USA \\
\texttt{justokarell@gmail.com}
}
\date{\today}

\begin{document}
\maketitle

\begin{abstract}
We introduce Update-Magnitude State Redistribution (UM-SRD), a shut-off
extension of the weighted state redistribution (SRD) method of Berger
\& Giuliani for cut-cell finite-volume schemes. Standard weighted SRD
continues to redistribute states even when the finite-volume update
vanishes. UM-SRD modifies the redistribution step by blending the SRD
operator with the identity through a local update-dependent parameter,
reducing exactly to the base scheme when updates vanish. For a
one-dimensional linear advection model problem with a single small cut
cell, we prove conservation and total variation diminishing (TVD)
properties under the same CFL condition as the base upwind scheme, and
analyze the perturbation introduced by the blending operator. We show
that the modification preserves the first-order consistency of the base
scheme and exactly preserves steady states when finite-volume updates are
zero. Numerical experiments in one and two dimensions confirm first-order 
convergence, exact shut-off behavior, stabilization near small cut cells 
under the full-mesh timestep, and exact steady-state preservation. The 
shut-off mechanism is active when finite-volume updates vanish or nearly 
vanish; in fully active advection UM-SRD reduces to standard SRD by design.
\end{abstract}

\noindent\textbf{Keywords:} cut-cell methods, state redistribution, 
finite-volume schemes, TVD schemes, embedded boundary methods, small-cell problem.

\smallskip
\noindent\textbf{MSC2020:} 65M08, 65M12.

\section{Introduction}
Simulating fluid flow, heat transfer, or wave propagation around complex 
geometries --- aircraft wings, vascular networks, turbine blades --- requires 
solving partial differential equations on domains with irregular boundaries. 
The standard approach is to generate a mesh that conforms to the boundary, 
but constructing such meshes for three-dimensional geometries is 
time-consuming, often requiring significant manual intervention, and must be 
repeated whenever the geometry changes. Cut-cell methods offer an alternative: 
discretize on a simple background Cartesian grid and represent the geometry 
by cutting the grid cells that intersect the boundary. Cells that intersect 
the boundary are retained as irregular \emph{cut cells} rather than discarded, 
allowing complex geometries to be handled without generating a body-fitted 
mesh. The bulk of the computation remains on a uniform Cartesian structure, 
and grid generation reduces to a geometry intersection problem that can be 
automated. The cost is that cells near the boundary can have arbitrarily small 
volume fractions. Under standard explicit finite-volume schemes, these small 
cells impose a time-step restriction proportional to their volume, which can 
be orders of magnitude smaller than the timestep required for stability on 
the background Cartesian grid~\cite{colella1998,quirk1994}.

Several approaches have been developed to avoid the small-cell timestep 
restriction, including cell merging, h-box methods, and flux 
redistribution~\cite{colella2006,berger1989}. Berger \& Giuliani~\cite{berger2024} 
introduced weighted state redistribution (SRD) as a provably stable 
alternative. Their method constructs merging neighborhoods around small cut 
cells, evolves neighborhood-averaged states on a merged mesh, and 
redistributes the result back to the original mesh using carefully chosen 
weights, retaining conservation and TVD properties without restricting the 
timestep to the small-cell size. They prove weighted SRD is TVD for 
one-dimensional scalar advection and demonstrate robust stabilization near 
cut cells with volume fraction as small as $\alpha = 0.05$. They explicitly 
note, however, that SRD has no mechanism to reduce redistribution when the 
finite-volume update is small: the operator fires at every time step 
regardless of update magnitude, even near steady states or under very small 
timesteps. They describe this as an open gap relative to classical flux 
redistribution methods~\cite{colella2006,berger1989}, which naturally vanish 
when the flux imbalance is negligible.

This paper introduces Update-Magnitude State Redistribution (UM-SRD), a 
direct extension of weighted SRD that closes this gap. The absence of a 
shut-off mechanism has two concrete consequences. First, SRD does not 
preserve steady states of the underlying finite-volume scheme: even when 
the flux balance is exactly zero, SRD continues to redistribute state values 
across small-cell neighborhoods, replacing pointwise values with their 
volume-weighted averages (see Proposition~\ref{prop:srd-steady}). Second, in 
smooth regions where the finite-volume update is small but nonzero, repeated 
redistribution introduces numerical dissipation proportional to the gradient 
across the neighborhood, independent of the actual dynamics. UM-SRD eliminates both effects by blending the SRD operator with the 
identity through a local update-dependent parameter that reduces exactly 
to zero when the finite-volume update vanishes. In fully active advection, 
where updates are nonzero at every step, the blending parameter satisfies 
$s_j \approx 1$ and UM-SRD reduces to standard SRD; the two methods are 
identical in this regime by design. The benefit of UM-SRD is concentrated 
in near-steady or quasi-steady regimes where SRD continues to apply 
redistribution unnecessarily.

\medskip

\noindent\textbf{Analytical results.}
For a one-dimensional linear advection model problem with a single small
cut cell, we establish two properties of UM-SRD. First, the UM-SRD
time-stepping operator is TVD on the original cut-cell mesh under the same
CFL condition as the base first-order upwind scheme, by virtue of the TVD
structure of weighted SRD and the convex-combination form of the blending.
Second, UM-SRD may be viewed as a perturbation of the base scheme whose
size is proportional to the update-dependent blending parameter; when this
parameter scales with the update magnitude, the contribution to the local
truncation error is of higher order than that of the base scheme. These
results are confirmed numerically in Section~\ref{sec:numerics}.

\medskip
UM-SRD is deliberately minimal: it reuses the weighted SRD neighborhood and
weight structure unchanged, adding only a local, smoothly varying blending
parameter per neighborhood. We give a complete algorithmic description in
Section~\ref{sec:algorithm} and numerical experiments in one and two
dimensions in Section~\ref{sec:numerics}, illustrating shut-off behavior,
stabilization near small cut cells, and steady-state preservation.
\paragraph{Notation and standing assumptions.}
We approximate the solution $u(x,t)$ to the hyperbolic conservation law
\[
  u_t + \nabla \cdot f(u) = 0, \qquad x \in \Omega \subset \mathbb{R}^d,
  \quad t > 0,
\]
where $\Omega$ is a bounded domain with piecewise smooth boundary,
$d \in \{1, 2, 3\}$ is the spatial dimension, and $f(u)$ is a smooth flux
function. The domain is discretized into a finite collection of
non-overlapping regions called \emph{cells}, indexed by $i$. Each cell
$I_i \subset \Omega$ is a control volume with volume $V_i = |I_i|$; in one
space dimension, $V_i = \Delta x_i$. Cells intersected by the domain
boundary can have arbitrarily small volume; a cell is called a \emph{small
cut cell} if $V_i = \alpha_i h$ with $0 < \alpha_i < 1$, where $h$ denotes
the background grid cell width.

The primary unknowns are cell averages
\[
  U_i^n \approx \frac{1}{V_i}\int_{I_i} u(x,t_n)\,dx, \qquad t_n = n\Delta t,
\]
approximating the mean value of $u$ over cell $I_i$ at time $t_n$. After
one step of the base finite-volume scheme and before redistribution, the
updated averages are denoted $U^*$. Small cut cells are grouped into
disjoint merging neighborhoods $M_j$, where $j$ indexes the neighborhood
and $M_j$ is the set of cell indices it contains. For each cell $i$, $W_i$
denotes the set of neighborhood indices $j$ such that cell $i$ receives a
contribution from $M_j$ during redistribution. The weighted neighborhood
volume is $\hat{V}_j = \sum_{i \in M_j} w_{i,j} V_i$, where $w_{i,j} \ge
0$ are the SRD weights satisfying $\sum_{j \in W_i} w_{i,j} = 1$ for all
$i$. We write $B$ for one application of the base finite-volume scheme,
$S_j$ for the SRD operator on neighborhood $M_j$, $\mathrm{Id}$ for the
identity operator, and $TV(U) = \sum_{i=0}^{N-1} |U_i - U_{i-1}|$ for the
total variation on the original grid. We assume the CFL condition $0 \le
a\,\Delta t / \Delta x_i \le 1$ for all cells $i$.

The TVD and accuracy results in Section~\ref{sec:analysis-1D} are proved
for scalar linear advection $u_t + au_x = 0$ ($a > 0$) with one small cut
cell of volume fraction $\alpha < 1/2$, merging left, first-order upwind
flux $F(U_i, U_{i+1}) = aU_i$, and CFL condition $a\Delta t/h \le 1$.
\section{Related Work}

Flux redistribution methods~\cite{colella2006,berger1989} address the 
small-cell timestep restriction by redistributing mass or fluxes from small 
cells to neighboring cells conservatively and stably. A key property is that 
the post-processing vanishes when the finite-volume update is small: if the 
flux imbalance is negligible, the redistribution step has no effect and the 
base scheme is recovered exactly.

Berger \& Giuliani~\cite{berger2024} introduced weighted SRD as a provably 
stable alternative. Their method constructs merging neighborhoods around 
small cut cells, evolves neighborhood-averaged states on a merged mesh, and 
redistributes the result back using carefully chosen weights. They prove 
weighted SRD is TVD for one-dimensional scalar advection and demonstrate 
robust stabilization near arbitrarily small cut cells. The method does not 
have a vanishing-update property: the redistribution operator is applied at 
every time step regardless of update magnitude, in contrast to flux 
redistribution whose post-processing diminishes automatically as the update 
tends to zero.

The approach of applying corrections only where the solution requires them 
appears in the Multidimensional Optimal Order Detection (MOOD) framework of 
Clain, Diot, and Loub\`ere~\cite{clain2011}, in which a cell-by-cell 
a posteriori detection step determines whether a high-order update must be 
replaced by a more diffusive one to satisfy admissibility constraints. UM-SRD 
shares the same local, solution-dependent activation principle but operates 
as an a priori blending parameter on the redistribution operator rather than 
as an a posteriori limiter on the order of accuracy. The blending concept is also related to update-dependent limiting strategies 
in finite-volume methods~\cite{vanleer1979,sweby1984}, adapted here to the 
state redistribution framework. Other approaches to embedded-boundary 
stabilization include explicit-implicit hybrid schemes~\cite{may2017} and 
moving-boundary cut-cell formulations~\cite{schneiders2016}; UM-SRD is 
complementary to these in that it targets the redistribution post-processing 
step rather than the flux computation or time integration.

\section{Definition of the UM-SRD Method}\label{sec:framework}

Berger \& Giuliani~\cite{berger2024} define a time step in three stages. 
Starting from $U^n$, the base finite-volume scheme produces the intermediate 
state
\begin{equation}
  U^*_i = U^n_i - \frac{\Delta t}{V_i} \sum_{f \in \partial I_i} F_{i,f},
  \label{eq:fv-update}
\end{equation}
where $F_{i,f}$ is the numerical flux through face $f$ of cell $I_i$. For 
systems with bidirectional characteristics, a pre-merging stage precedes 
this update as described in \cite{berger2024}, Section~4.1. The 
redistribution step then maps $U^*$ to $U^{n+1}$ by first forming a 
volume-weighted average over each merging neighborhood $M_j$,
\begin{equation}
  \hat{Q}^*_j = \frac{1}{\hat{V}_j} \sum_{i \in M_j} w_{i,j} V_i U^*_i,
  \label{eq:Qhat}
\end{equation}
and then assigning a weighted combination of these averages back to each 
cell:
\begin{equation}
  [S_j U^*]_i = \sum_{k \in W_i} w_{i,k} \hat{Q}^*_k,
  \qquad i \in M_j.
  \label{eq:SRD-op}
\end{equation}
The weights satisfy $\sum_{j \in W_i} w_{i,j} = 1$ for all $i$, ensuring 
conservation, and $S_j$ acts as the identity outside $M_j$. As noted in 
\cite{berger2024}, this operator fires at every time step regardless of 
the magnitude of $U^* - U^n$: when the finite-volume update is small or 
zero, $S_j$ continues to replace cell values with neighborhood averages, 
modifying the solution even where no stabilization is needed.

UM-SRD addresses this by introducing a scalar blending parameter $s_j \in 
[0,1]$ that measures whether redistribution is warranted in neighborhood 
$M_j$. Define the maximum update magnitude over the neighborhood,
\begin{equation}
  \Delta U_{\max,j} = \max_{i \in M_j} \|U^*_i - U^n_i\|_2,
  \label{eq:DUmax}
\end{equation}
and the normalized indicator
\begin{equation}
  \eta_j = \frac{\Delta U_{\max,j}}{\varepsilon + \Delta U_{\max,j}},
  \label{eq:eta}
\end{equation}
where $\varepsilon > 0$ (e.g.\ $\varepsilon = 10^{-14}$) prevents division 
by zero. By construction $\eta_j = 0$ if and only if all 
updates in $M_j$ are at machine precision, and $\eta_j \approx 1$ whenever 
any cell has a nonzero update. The blending parameter is then
\begin{equation}
  s_j = \frac{\eta_j^p}{\eta_j^p + \tau^p}
  \label{eq:shutoff}
\end{equation}
for parameters $p \ge 1$ and $\tau > 0$. This function is strictly 
increasing in $\eta_j$, belongs to $C^\infty[0,1]$, satisfies $s_j = 0$ 
when $\eta_j = 0$, and satisfies $s_j \to 1$ as $\tau \to 0$.

The full SRD step $U^{n+1} = S_j U^*$ is replaced by the blended operator
\begin{equation}
  R_j(s_j) = (1 - s_j)\,\mathrm{Id} + s_j\,S_j,
  \label{eq:Rj}
\end{equation}
where $\mathrm{Id}$ denotes the identity. The global operator $R$ acts as 
$R_j$ on each small-cell neighborhood and as the identity elsewhere, giving 
in component form
\begin{equation}
  U^{n+1}_i = (1 - s_j)\,U^*_i + s_j \sum_{k \in W_i} w_{i,k} \hat{Q}^*_k,
  \qquad i \in M_j.
  \label{eq:UMSRD-update}
\end{equation}
When all updates in $M_j$ vanish, $s_j = 0$ and $R_j(0) = \mathrm{Id}$, 
so $U^{n+1} = U^*$ exactly. When updates are large, $s_j \approx 1$ and 
$R_j \approx S_j$, recovering standard SRD. The normalized indicator 
\eqref{eq:eta} behaves nearly as a binary switch: $s_j = 0$ exactly when 
all updates are at machine precision, and $s_j \approx 1$ as soon as any 
cell in $M_j$ has a nonzero update. This is a deliberate design choice. 
Exact shut-off at machine precision is what guarantees exact steady-state 
preservation (Proposition~\ref{prop:srd-steady}); a smoothly varying but 
always-positive $s_j$ would reduce but not eliminate the steady-state drift 
identified in Section~\ref{sec:steady-state}. The binary character of the 
normalized indicator is therefore a feature of the design, not a limitation.

\begin{proposition}[Conservation]
\label{prop:conservation}
For any $s_j \in [0,1]$, $R_j(s_j)$ satisfies
\[
  \sum_{i \in M_j} V_i\, U^{n+1}_i = \sum_{i \in M_j} V_i\, U^*_i.
\]
\end{proposition}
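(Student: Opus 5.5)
The plan is to use the fact that $R_j(s_j)$ is affine in $s_j$, which reduces the claim to conservation of the SRD operator $S_j$ alone. Summing \eqref{eq:UMSRD-update} against the volumes gives
\[
  \sum_{i \in M_j} V_i U^{n+1}_i
  = (1-s_j)\sum_{i \in M_j} V_i U^*_i
  + s_j \sum_{i \in M_j} V_i\,[S_j U^*]_i .
\]
If I can show $\sum_{i\in M_j} V_i [S_jU^*]_i = \sum_{i\in M_j} V_i U^*_i$, the right-hand side collapses to $\sum_{i\in M_j} V_iU^*_i$ for every $s_j\in[0,1]$, since $(1-s_j)+s_j=1$. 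The blending itself therefore contributes nothing, and all the work is in the SRD step.

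For the SRD step I will use the standing assumption that the merging neighborhoods $M_j$ are disjoint, so each cell $i\in M_j$ has $W_i=\{j\}$. The normalization $\sum_{k\in W_i} w_{i,k}=1$ then forces $w_{i,j}=1$, so $\hat V_j=\sum_{i\in M_j}V_i$. By \eqref{eq:SRD-op} every cell of $M_j$ receives the same value $\hat Q^*_j$. Using \eqref{eq:Qhat}, this gives
\[
  \sum_{i\in M_j} V_i \hat Q^*_j = \hat V_j \hat Q^*_j = \sum_{i\in M_j} w_{i,j}V_iU^*_i = \sum_{i\in M_j}V_iU^*_i ,
\]
which is the required identity.

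The main obstacle is that the general weighted-SRD setting of \cite{berger2024} allows $W_i$ to contain several neighborhoods. In that case conservation holds globally but not literally neighborhood by neighborhood. I will therefore add a remark giving the global version, obtained by exchanging the order of summation. This uses the consistency of the index sets, namely $i\in M_k \iff k\in W_i$, which is what makes the double sum a single sum over pairs $(i,k)$. The chain is
\[
  \sum_i V_i\sum_{k\in W_i} w_{i,k}\hat Q^*_k
  = \sum_k \hat Q^*_k\sum_{i\in M_k} w_{i,k}V_i
  = \sum_k \hat V_k\hat Q^*_k
  = \sum_k\sum_{i\in M_k} w_{i,k}V_iU^*_i
  = \sum_i V_iU^*_i\sum_{k\in W_i}w_{i,k}
  = \sum_i V_iU^*_i .
\]
Since each $R_j$ is a convex combination of $\mathrm{Id}$ and $S_j$, the same affine argument applied cell-wise then shows that the global operator $R$ conserves total mass. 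It remains to check that the global argument also holds when the $s_j$ differ between neighborhoods; this is the one point I would need to verify carefully in the overlapping case.
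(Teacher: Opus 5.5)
Your proof is correct and is essentially the paper's argument: substitute \eqref{eq:UMSRD-update}, use $W_i=\{j\}$, and collapse $s_j\sum_{i\in M_j}V_i\hat Q^*_j$ back to $s_j\sum_{i\in M_j}V_iU^*_i$; your observation that the normalization forces $w_{i,j}=1$ makes explicit a step the paper uses silently in both $\sum_{i\in M_j}V_i=\hat V_j$ and $\hat V_j\hat Q^*_j=\sum_{i\in M_j}V_iU^*_i$. On the point you left open, the cellwise formula \eqref{eq:UMSRD-update} can lose global conservation when a shared cell lies in neighborhoods with different $s_j$, whereas the neighborhood-wise blend $U^{n+1}_i=\sum_{k\in W_i}w_{i,k}\bigl[(1-s_k)U^*_i+s_k\hat Q^*_k\bigr]$ conserves $\sum_i V_iU_i$ by exactly your exchange-of-summation argument.
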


\begin{proof}
For cells outside $M_j$, $U^{n+1}_i = U^*_i$ trivially. For cells in 
$M_j$, assuming each cell belongs to exactly one neighborhood so that 
$W_i = \{j\}$, substituting \eqref{eq:UMSRD-update} gives
\begin{align*}
  \sum_{i \in M_j} V_i\,U^{n+1}_i
  &= (1-s_j)\sum_{i \in M_j} V_i U^*_i
     + s_j \sum_{i \in M_j} V_i \hat{Q}^*_j \\
  &= (1-s_j)\sum_{i \in M_j} V_i U^*_i
     + s_j\, \hat{Q}^*_j \hat{V}_j \\
  &= (1-s_j)\sum_{i \in M_j} V_i U^*_i
     + s_j \sum_{i \in M_j} V_i U^*_i
   = \sum_{i \in M_j} V_i U^*_i. \qedhere
\end{align*}
\end{proof}
\section{Analysis of UM-SRD for a 1D Model Problem}
\label{sec:analysis-1D}

We work throughout under the 1D model setting stated in 
Section~\ref{sec:framework}: scalar linear advection $u_t + au_x = 0$ 
($a > 0$) with one small cut cell of volume fraction $\alpha < 1/2$ merged 
to the left, first-order upwind flux $F(U_i, U_{i+1}) = aU_i$, and CFL 
condition $a\Delta t/h \le 1$. Total variation is $TV(U) = 
\sum_{i=0}^{N-1}|U_i - U_{i-1}| = \|DU\|_1$, where $D$ denotes the 
first-difference operator. One UM-SRD timestep is the operator composition
\[
U^n
\;\xrightarrow{\text{pre-merge}}\;
Q^{n,-1}
\;\xrightarrow{\text{FV update}}\;
Q^n
\;\xrightarrow{\text{redistribution}}\;
U^{n+1}
\;=\;
R \circ B \circ P(U^n),
\]
where $P$ denotes the pre-merging operator, $B$ the merged-mesh 
finite-volume update, and $R$ the blended redistribution operator.

\subsection{TVD Properties}

\begin{lemma}[SRD is TVD]
\label{lem:TVD-SRD}
With the one-small-cell merging-left configuration and monotone weights
$w_{-1,-1} = \alpha$, the SRD operator satisfies
\[
  TV\!\left(S_j(U^{\mathrm{init}})\right) \le TV(U^{\mathrm{init}})
\]
for all initial data $U^{\mathrm{init}}$.
\end{lemma}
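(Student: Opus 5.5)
The plan is to write out the SRD operator explicitly in the one-small-cell, merging-left configuration and observe that it changes only two cell values. Both new values are convex combinations of the two old ones and keep their order. The total-variation bound then follows from a local triangle-inequality argument on the few differences that change.

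\textbf{Setup.} I will index the small cut cell as $0$, with $V_0=\alpha h$, and its left neighbour as $-1$, with $V_{-1}=h$. Every other cell forms its own trivial neighbourhood, on which $S_j$ acts as the identity. The nontrivial neighbourhood is $M_0=\{-1,0\}$. Cell $-1$ also belongs to its own neighbourhood $M_{-1}=\{-1\}$, so $W_{-1}=\{-1,0\}$ and $W_0=\{0\}$. With the monotone weights $w_{-1,-1}=\alpha$, $w_{-1,0}=1-\alpha$ and $w_{0,0}=1$, formula \eqref{eq:Qhat} gives $\hat V_0=(1-\alpha)h+\alpha h=h$. Writing $a=U^{\mathrm{init}}_{-1}$ and $b=U^{\mathrm{init}}_0$, it also gives
\[
\hat Q_0=(1-\alpha)a+\alpha b, \qquad \hat Q_{-1}=a .
\]
By \eqref{eq:SRD-op}, the redistributed values are then
\[
U'_0=\hat Q_0, \qquad U'_{-1}=\alpha a+(1-\alpha)\hat Q_0,
\]
and all other $U'_i=U^{\mathrm{init}}_i$. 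I will quickly check conservation as a sanity test of this bookkeeping, consistent with Proposition~\ref{prop:conservation}.

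\textbf{Key ordering fact.} Since $\alpha\in(0,1)$, the value $\hat Q_0$ lies between $a$ and $b$. Likewise $U'_{-1}$ lies between $a$ and $\hat Q_0$. Hence $a,\;U'_{-1},\;U'_0,\;b$ form a monotone sequence, which gives the exact identity
\[
|U'_{-1}-a|+|U'_0-U'_{-1}|+|b-U'_0|=|b-a| .
\]

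\textbf{TV estimate.} Only three differences in $TV$ change: $|U'_{-1}-U_{-2}|$, $|U'_0-U'_{-1}|$ and $|U_1-U'_0|$. I will bound the two outer ones by the triangle inequality:
\[
|U'_{-1}-U_{-2}|\le |a-U_{-2}|+|U'_{-1}-a|, \qquad |U_1-U'_0|\le |U_1-b|+|b-U'_0| .
\]
Adding the middle term and using the monotonicity identity, the local sum is at most $|a-U_{-2}|+|b-a|+|U_1-b|$. This is exactly the corresponding part of $TV(U^{\mathrm{init}})$, and all other terms are unchanged, which proves the lemma.

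If the small cell sits at a domain end, so that $U_1$ or $U_{-2}$ is absent, the corresponding term simply drops out. For nonperiodic sums the same argument applies verbatim.

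The main obstacle is the first step: getting the neighbourhood and weight bookkeeping right, namely that cell $-1$ belongs to two neighbourhoods with weights $\alpha$ and $1-\alpha$. Once the explicit formulas are correct, the nesting of the convex combinations makes the TV bound immediate.
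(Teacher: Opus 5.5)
Your proof is correct, and it takes a genuinely different route from the paper's. The paper's proof is purely by citation. It invokes Berger \& Giuliani for TVD of a pre-merging (averaging) stage and of the redistribution back to the original mesh, then composes two TVD maps. You instead write the composite map $S_j$ in closed form, $U'_0=(1-\alpha)a+\alpha b$ and $U'_{-1}=\alpha a+(1-\alpha)U'_0$, note that $a,U'_{-1},U'_0,b$ is monotone, and finish with a triangle inequality on the three affected differences.

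Your route is self-contained and works on the original mesh only. You never have to define total variation for the intermediate averages $\hat Q^*_k$, which live on overlapping neighborhoods; the two-step composition needs such a notion (e.g.\ ordering the $\hat Q^*_k$ by neighborhood index) and leaves it to the citation. Your computation also shows that neither $\alpha<1/2$ nor the specific value $w_{-1,-1}=\alpha$ matters for this step: for any $w_{-1,-1}\in(0,1]$ the two new values remain nested convex combinations of $a$ and $b$.

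Your argument is also robust to an inconsistency in the paper. In Proposition~\ref{prop:srd-steady} and Section~\ref{sec:accuracy} the paper treats $S_j$ as assigning $\hat Q^*$ to both cells. That is not what \eqref{eq:SRD-op} gives with $W_{-1}=\{-1,0\}$, but your argument covers that variant verbatim, since $a,\hat Q^*,\hat Q^*,b$ is still monotone.

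What the citation nominally buys in return is coverage of the pre-merging stage. The proof of Theorem~\ref{thm:TVD-UMSRD} later draws exactly that from this lemma, $TV(Q^{n,-1})\le TV(U^n)$. Your proof covers only the redistribution operator $S_j$, which is what the statement asserts, so that later use would need its own, equally elementary, convex-combination argument.
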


\begin{proof}
Berger \& Giuliani~\cite{berger2024} establish that the pre-merging stage
is TVD and that redistribution back to the original mesh is TVD. The
composition of two TVD operators is TVD, so the result follows.
\end{proof}

\begin{lemma}[Finite-volume update is TVD]
\label{lem:TVD-Q-update}
Let $\{Q_i^n\}$ denote the merged-mesh cell averages updated by the
first-order upwind scheme. If $0 \le a\Delta t / \Delta x_i \le 1$ for
all merged cells, then $TV(Q^n) \le TV(Q^{n-1})$.
\end{lemma}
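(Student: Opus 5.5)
The plan is the classical Harten argument for first-order upwind on a nonuniform mesh. On the merged mesh, with cell widths $\Delta x_i$ (the merged cell has width $(1+\alpha)h$ or similar, and all other cells have width $h$), the upwind update with flux $F(Q_i,Q_{i+1}) = aQ_i$ reads
\[
  Q^n_i = Q^{n-1}_i - \nu_i\,(Q^{n-1}_i - Q^{n-1}_{i-1}), \qquad \nu_i = \frac{a\Delta t}{\Delta x_i} \in [0,1].
\]
Equivalently, $Q^n_i = (1-\nu_i)Q^{n-1}_i + \nu_i Q^{n-1}_{i-1}$, which is a convex combination of its two upwind values.

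Next I will take first differences. Writing $D_i = Q^{n-1}_i - Q^{n-1}_{i-1}$, a direct computation gives
\[
  Q^n_i - Q^n_{i-1} = (1-\nu_i)\,D_i + \nu_{i-1}\,D_{i-1}.
\]
Both coefficients are nonnegative by the CFL hypothesis. Taking absolute values, summing over $i$, and shifting the index in the second sum gives
\[
  \sum_i |Q^n_i - Q^n_{i-1}| \le \sum_i \bigl[(1-\nu_i) + \nu_i\bigr] |D_i| = TV(Q^{n-1}).
\]
This index shift is where the mesh's nonuniformity could matter. It does not: the coefficient of $|D_i|$ collects $(1-\nu_i)$ from the $i$-th difference and $\nu_i$ from the $(i+1)$-th difference, and both involve the same $\nu_i$. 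So they sum to exactly $1$ regardless of whether the cell widths vary.

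The step needing the most care is the boundary terms. At the inflow end, the ghost/inflow value must be treated consistently, either as constant in time or by including its difference in the total variation. The outflow end must be handled so that the last difference does not pick up an unmatched coefficient. I would first handle this by assuming periodic data or compactly supported data (as in the TV definition with index range $0,\dots,N-1$ and a fixed inflow state). The telescoping then closes exactly, and at most a nonpositive outflow term is dropped, giving $TV(Q^n)\le TV(Q^{n-1})$.
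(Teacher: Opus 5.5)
Your proposal is correct and takes essentially the paper's route: the paper writes the upwind update in the same convex-combination form $Q_i^n = (1-a\lambda_i)Q_i^{n-1} + a\lambda_i Q_{i-1}^{n-1}$ and cites Harten's lemma, while you carry out the Harten difference computation explicitly. Your check that the coefficients of $|D_i|$ sum to exactly one on a nonuniform mesh is correct, and so is your handling of the inflow/outflow boundary terms, which the paper does not address.
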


\begin{proof}
The upwind update reads
\[
  Q_i^n = (1 - a\lambda_i) Q_i^{n-1} + a\lambda_i Q_{i-1}^{n-1},
  \qquad \lambda_i = \Delta t / \Delta x_i.
\]
Since $0 \le a\lambda_i \le 1$, each updated value is a convex combination
of neighboring states, and the conclusion follows from Harten's
lemma~\cite{harten1983lax}.
\end{proof}

Because every merged cell satisfies $\Delta x_i \ge h$, the full-mesh CFL
condition $a\Delta t/h \le 1$ implies $a\Delta t/\Delta x_i \le 1$ on the
merged mesh, so the hypothesis of Lemma~\ref{lem:TVD-Q-update} is always
satisfied.

\begin{lemma}[Blended redistribution is TVD]
\label{lem:TVD-R}
For any $s_j \in [0,1]$,
\[
  TV\!\left(R_j(s_j) Q^n\right) \le TV(Q^n).
\]
\end{lemma}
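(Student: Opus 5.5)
The plan is to use the convex-combination structure of $R_j(s_j)$ together with the convexity of the total variation seminorm. $TV(U)=\|DU\|_1$ with $D$ linear, so $TV$ is a seminorm and therefore convex and positively homogeneous.

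First I write $R_j(s_j)Q^n = (1-s_j)Q^n + s_j S_jQ^n$. Here $s_j$ is treated as a fixed scalar in $[0,1]$: it is computed from $U^*-U^n$ before the redistribution is applied, so it does not depend on the argument to which $R_j$ is applied. Applying $D$ and the triangle inequality for $\|\cdot\|_1$ gives
\[
TV\!\left(R_j(s_j)Q^n\right) \le (1-s_j)\,TV(Q^n) + s_j\,TV\!\left(S_jQ^n\right).
\]
The second term is controlled by Lemma~\ref{lem:TVD-SRD}, which gives $TV(S_jQ^n)\le TV(Q^n)$ for arbitrary input data. Substituting this bound yields $TV(R_j(s_j)Q^n)\le (1-s_j+s_j)\,TV(Q^n)=TV(Q^n)$. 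The endpoint cases are consistent: $s_j=0$ gives equality and $s_j=1$ is exactly Lemma~\ref{lem:TVD-SRD}.

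The main obstacle is making sure Lemma~\ref{lem:TVD-SRD} applies to the same object that is being blended. The blend acts on the original mesh: it mixes the intermediate state $U^*_i$ with $[S_jU^*]_i$ cellwise, whereas the SRD lemma was phrased for the operator acting on initial data.

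I would address this first by reading $S_j$ as a single map on original-mesh vectors, namely the composition of the averaging \eqref{eq:Qhat} with the redistribution \eqref{eq:SRD-op}. I would then check that the TVD property from \cite{berger2024} holds for that map for arbitrary input, which is what the "for all initial data" clause of Lemma~\ref{lem:TVD-SRD} asserts.

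A second point to state explicitly is that the global operator $R$ acts as the identity outside $M_j$. With a single small-cell neighborhood, $R=R_j$ globally, so the inequality above applies to the full vector rather than to a restriction. No interface terms between $M_j$ and the rest of the mesh then need separate treatment, because the convexity argument works directly with the global $TV$.
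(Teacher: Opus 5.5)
Your proposal is correct and is essentially the paper's proof: write $R_j(s_j)Q^n=(1-s_j)Q^n+s_jS_jQ^n$, apply the triangle inequality to $\|D\,\cdot\,\|_1$, and bound $TV(S_jQ^n)$ by Lemma~\ref{lem:TVD-SRD}. You also take care to read $\mathrm{Id}$ and $S_j$ as maps on the same original-mesh vectors; the paper leaves this point implicit, and making it explicit does not change the argument.
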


\begin{proof}
Writing $R_j(s_j) = (1-s_j)\,\mathrm{Id} + s_j S_j$ and applying the 
triangle inequality gives
\[
  TV(U^{n+1})
  = \|D[(1-s_j)Q^n + s_j S_j(Q^n)]\|_1
  \le (1-s_j)\|DQ^n\|_1 + s_j\|DS_j(Q^n)\|_1.
\]
Lemma~\ref{lem:TVD-SRD} gives $\|DS_j(Q^n)\|_1 \le \|DQ^n\|_1$, so
\[
  TV(U^{n+1}) \le (1-s_j)\,TV(Q^n) + s_j\,TV(Q^n) = TV(Q^n). \qedhere
\]
\end{proof}

\begin{theorem}[UM-SRD is TVD]
\label{thm:TVD-UMSRD}
Under the 1D model assumptions, $TV(U^{n+1}) \le TV(U^n)$ for all time
steps.
\end{theorem}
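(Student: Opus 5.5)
The plan is to use the factorization $U^{n+1} = R\circ B\circ P(U^n)$ stated at the start of Section~\ref{sec:analysis-1D} and bound the total variation one stage at a time. Each of the three stages should be non-increasing in TV, and chaining the inequalities will give $TV(U^{n+1}) \le TV(U^n)$. One bookkeeping issue comes first. The operators move between the original cut-cell mesh and the merged mesh, so I will fix what $TV$ means at each stage. On the merged mesh I will regard the state as the piecewise-constant function on the original grid that takes the merged value on every original cell of a neighborhood. Under this convention, TV measured on the original indexing equals TV on the merged mesh, because repeated entries contribute zero jumps. This convention is what allows the lemmas to be composed.

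\textbf{Step 1 (pre-merge).} I will show $TV(Q^{n,-1}) \le TV(U^n)$. This is the pre-merging TVD property of Berger \& Giuliani~\cite{berger2024}, which was already invoked in the proof of Lemma~\ref{lem:TVD-SRD}. In the one-small-cell, merge-left setting it can also be checked directly. The merged value is a convex combination of $U^n_{-1}$ and the small-cell value, so it lies between the two. Replacing a pair of adjacent values by a value between them never increases $\sum_i |U_i - U_{i-1}|$, by the triangle inequality.

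\textbf{Step 2 (FV update).} Lemma~\ref{lem:TVD-Q-update} gives $TV(Q^n) \le TV(Q^{n,-1})$. Its hypothesis follows from $a\Delta t/h \le 1$, since every merged cell satisfies $\Delta x_i \ge h$, as noted after that lemma.

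\textbf{Step 3 (blended redistribution).} Lemma~\ref{lem:TVD-R} gives $TV(R_j(s_j)Q^n) \le TV(Q^n)$ for every $s_j\in[0,1]$. In the 1D model there is only one neighborhood and $R$ is the identity outside it, so $R = R_j(s_j)$ globally. The parameter $s_j$ depends on the data through $\Delta U_{\max,j}$. This does not matter, because Lemma~\ref{lem:TVD-R} holds pointwise in $s_j$: I will freeze the computed value of $s_j$ and then apply the lemma.

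Chaining Steps 1--3 gives the theorem. The main obstacle is Step 3 combined with the mesh-identification issue. Lemma~\ref{lem:TVD-SRD} is stated for the full SRD operator, which Berger \& Giuliani define as including the pre-merge. Lemma~\ref{lem:TVD-R}, however, needs the bound for the redistribution part alone, applied to $Q^n$ and measured on the original grid. I will therefore first extract from~\cite{berger2024} the statement that redistribution back to the original mesh is TVD, in the embedded sense fixed above. Then I will confirm that the convex combination $(1-s_j)Q^n + s_j S_j Q^n$ is taken between two vectors on the same original-grid indexing, so that the triangle-inequality argument of Lemma~\ref{lem:TVD-R} applies as written.
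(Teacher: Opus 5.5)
Your proposal is correct and is essentially the paper's proof: the same three-stage chain $TV(U^{n+1})\le TV(Q^n)\le TV(Q^{n,-1})\le TV(U^n)$, built from the pre-merging TVD property, Lemma~\ref{lem:TVD-Q-update}, and Lemma~\ref{lem:TVD-R}. Your extra bookkeeping makes explicit what the paper's proof leaves implicit: you embed merged-mesh states on the original grid, check the pre-merge directly by the triangle inequality, freeze $s_j$, and use only the redistribution half of the Berger--Giuliani result in Step~3 instead of citing Lemma~\ref{lem:TVD-SRD} at both ends.
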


\begin{proof}
Lemma~\ref{lem:TVD-SRD} gives $TV(Q^{n,-1}) \le TV(U^n)$ after 
pre-merging. Lemma~\ref{lem:TVD-Q-update} gives $TV(Q^n) \le 
TV(Q^{n,-1})$ after the finite-volume update on the merged mesh. 
Lemma~\ref{lem:TVD-R} gives $TV(U^{n+1}) \le TV(Q^n)$ after blended 
redistribution. Chaining the three inequalities yields $TV(U^{n+1}) \le 
TV(U^n)$.
\end{proof}

\subsection{Accuracy Analysis}
\label{sec:accuracy}

Using $R_j(s_j) = (1-s_j)\,\mathrm{Id} + s_j S_j$, the UM-SRD update 
satisfies
\begin{equation}
  U^{n+1,\mathrm{UM}} - U^{n+1,\mathrm{base}}
  = s_j^n (S_j - \mathrm{Id})\, U^{n+1,\mathrm{base}},
  \label{eq:perturbation}
\end{equation}
so UM-SRD acts as a perturbation of the base scheme whose size is controlled 
by $s_j^n$. The normalized indicator \eqref{eq:eta} satisfies 
$s_j^n = \mathcal{O}(1)$ whenever any update in $M_j$ is nonzero, and 
$s_j^n = 0$ exactly when all updates vanish. This is the implementation used 
in all experiments.

\begin{proposition}[First-order accuracy]
\label{prop:local-consistency}
With the normalized indicator of \eqref{eq:eta}, UM-SRD remains first-order 
accurate: for smooth solutions,
\[
  \|\tau^{n,\mathrm{UM}} - \tau^{n,\mathrm{base}}\|_\infty = \mathcal{O}(h).
\]
\end{proposition}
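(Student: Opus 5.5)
The plan is to work from the perturbation identity \eqref{eq:perturbation} and insert the exact solution. Let $u^n_i$ denote the exact cell averages at $t_n$. I define the local truncation errors by applying one step of each scheme to exact data:
\[
  \tau^{n,\mathrm{base}}_i = \frac{1}{\Delta t}\Big(u^{n+1}_i - [\mathcal{B}u^n]_i\Big),
  \qquad
  \tau^{n,\mathrm{UM}}_i = \frac{1}{\Delta t}\Big(u^{n+1}_i - [R\,\mathcal{B}u^n]_i\Big),
\]
where $\mathcal{B}$ is the base (pre-merge plus merged-mesh update) step. Subtracting, and using \eqref{eq:perturbation} with $U^{n+1,\mathrm{base}} = \mathcal{B}u^n$, gives
\[
  \tau^{n,\mathrm{UM}}_i - \tau^{n,\mathrm{base}}_i = -\frac{s_j^n}{\Delta t}\,\big[(S_j - \mathrm{Id})\,\mathcal{B}u^n\big]_i ,
\]
and this vanishes for $i\notin M_j$. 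So everything reduces to bounding $(S_j-\mathrm{Id})$ applied to a smooth grid function, on the finitely many cells of $M_j$. Since $s_j^n\in[0,1]$, I can bound $s_j^n$ by $1$; this is the worst case $s_j^n=\mathcal{O}(1)$ that the normalized indicator produces.

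First, I show that $S_j$ reproduces constants. By \eqref{eq:Qhat}, if $U^*_i\equiv c$ on $M_j$, then $\hat Q^*_j = c$, because $\hat V_j=\sum w_{i,j}V_i$. Then \eqref{eq:SRD-op} together with $\sum_k w_{i,k}=1$ gives $[S_jU^*]_i=c$. Hence $(S_j-\mathrm{Id})$ annihilates constants. Second, because $\mathcal{B}u^n$ is a smooth grid function, a Taylor expansion about the center of $M_j$ gives $[\mathcal{B}u^n]_i = c + \mathcal{O}(h)\,\|u_x\|_\infty$ for $i\in M_j$, uniformly in $\alpha$: the neighborhood has diameter at most $2h$ and the weights are bounded. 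Together these give $|(S_j-\mathrm{Id})\mathcal{B}u^n|_i = \mathcal{O}(h)$.

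The main obstacle is the factor $1/\Delta t$. Naively it turns the $\mathcal{O}(h)$ bound into $\mathcal{O}(1)$, which is not the claim. I will try two routes. The first is to use $s_j^n=0$ unless the update is nonzero, and then exploit the specific one-small-cell geometry. In that geometry the $\mathcal{O}(h)$ term is actually a first difference $h\,u_x$ times a weight difference, and the base scheme's own error on the merged neighborhood contains the same leading term. I would compute the leading Taylor coefficient explicitly for the cells $-1,0$ with $w_{-1,-1}=\alpha$ and check whether the $\mathcal{O}(h)$ redistribution error cancels against the merged-mesh flux error to leave $\mathcal{O}(h^2)$, which gives $\mathcal{O}(h)$ after dividing by $\Delta t\sim h$.

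If that cancellation fails, the fallback is to interpret $\tau$ as the one-step (non-normalized) error, matching \eqref{eq:perturbation} literally. Then the bound $\mathcal{O}(h)$ holds directly from the two steps above, and first-order accuracy follows as in \cite{berger2024}: the perturbation is confined to $\mathcal{O}(1)$ cells per step, which for these lower-dimensional boundary perturbations still yields first-order global convergence. This is the standard argument for SRD itself, and UM-SRD is a convex combination of SRD and the identity.
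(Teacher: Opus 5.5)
Your fallback is the paper's proof. The paper never divides by $\Delta t$. It takes $\tau^{n,\mathrm{UM}}-\tau^{n,\mathrm{base}} = s_j^n(S_j-\mathrm{Id})B(u^n)$ directly from the perturbation identity. It then writes $(S_j-\mathrm{Id})$ out on the two neighborhood cells as $\alpha(V_j-V_{j-1})$ and $(1-\alpha)(V_{j-1}-V_j)$, zero elsewhere, and concludes $\mathcal{O}(1)\cdot\mathcal{O}(h)$. Your argument does the same job without using the specific weights: $S_j-\mathrm{Id}$ annihilates constants, and a Taylor expansion over a patch of diameter $\mathcal{O}(h)$ then gives the $\mathcal{O}(h)$ bound. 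Strictly, the data must be constant on every $M_k$ with $k\in W_i$, not only on $M_j$; this is harmless because all of these lie within $\mathcal{O}(h)$ of cell $i$.

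Your first route cannot succeed, so it should not be your main line.

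\emph{Nothing to cancel.} The quantity being bounded is the difference of truncation errors. By your own computation this is exactly $-\frac{s_j^n}{\Delta t}(S_j-\mathrm{Id})\mathcal{B}u^n$. The exact solution and the base scheme's flux error have already dropped out of it, so there is nothing left for the redistribution term to cancel against.

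\emph{The term is genuinely of size $h$.} At cell $j-1$ it equals $\alpha(V_j-V_{j-1})$ with $V=\mathcal{B}u^n$. This is a difference of smooth values at points $(1+\alpha)h/2$ apart, hence $\approx\tfrac12\alpha(1+\alpha)h\,u_x$, independent of $\Delta t$. Take $\nu=a\Delta t/h$ fixed and $s_j^n\approx 1$, which the normalized indicator produces whenever the update is nonzero. Then the $1/\Delta t$-normalized difference is about $\alpha(1+\alpha)a\,u_x/(2\nu)=\mathcal{O}(1)$.

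So the displayed estimate holds only in the unnormalized one-step convention. That is the convention the paper uses implicitly, and it is your fallback. Your closing appeal to global first-order convergence via boundary-localized perturbations is a further claim that neither you nor the paper proves. It is not needed for the displayed bound.
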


\begin{proof}
From \eqref{eq:perturbation}, $\tau^{n,\mathrm{UM}} - \tau^{n,\mathrm{base}} 
= s_j^n (S_j - \mathrm{Id}) B(u^n)$. In the one-small-cell merging-left 
configuration,
\begin{align*}
  [(S_j - \mathrm{Id})V]_{j-1} &= \alpha(V_j - V_{j-1}), \\
  [(S_j - \mathrm{Id})V]_j     &= (1-\alpha)(V_{j-1} - V_j),
\end{align*}
with $(S_j - \mathrm{Id})V = 0$ at all other indices. For smooth solutions 
$\|(S_j - \mathrm{Id})B(u^n)\|_\infty = \mathcal{O}(h)$, and since 
$s_j^n = \mathcal{O}(1)$,
\[
  \|\tau^{n,\mathrm{UM}} - \tau^{n,\mathrm{base}}\|_\infty
  = \mathcal{O}(1) \cdot \mathcal{O}(h) = \mathcal{O}(h), \qedhere
\]
matching the truncation error of the underlying first-order scheme.
\end{proof}

\begin{remark}[Higher-order perturbation with an unnormalized indicator]
\label{rem:unnormalized}
If the blending parameter is instead defined via an unnormalized indicator 
satisfying $|s_j^n| \le C_s h^p$ for some $p \ge 1$ with $C_s$ independent 
of $h$ and $n$, then \eqref{eq:perturbation} gives 
$\|\tau^{n,\mathrm{UM}} - \tau^{n,\mathrm{base}}\|_\infty = \mathcal{O}(h^{p+1})$,
a higher-order perturbation than the base scheme. The normalized indicator 
is preferred in practice because it achieves exact shut-off at machine 
precision; the unnormalized variant provides a higher-order accuracy guarantee 
in smooth regions at the cost of only approximate shut-off.
\end{remark}

\subsection{Steady-State Preservation}
\label{sec:steady-state}

\begin{proposition}[SRD perturbs non-constant steady states]
\label{prop:srd-steady}
Suppose $U_i^* = U_i^n$ for all $i$. If $U_{j-1}^n \ne U_j^n$, then 
standard SRD produces $U^{n+1} \ne U^n$.
\end{proposition}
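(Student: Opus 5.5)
We compute the SRD output directly in the one-small-cell merging-left configuration of Section~\ref{sec:analysis-1D}. Since $U^* = U^n$, standard SRD gives $U^{n+1} = S_j U^* = S_j U^n$. It therefore suffices to show $(S_j - \mathrm{Id})U^n \ne 0$.

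For this we use the explicit formulas recorded in the proof of Proposition~\ref{prop:local-consistency}:
\[
  [(S_j - \mathrm{Id})U^n]_{j-1} = \alpha\,(U^n_j - U^n_{j-1}), \qquad
  [(S_j - \mathrm{Id})U^n]_j = (1-\alpha)\,(U^n_{j-1} - U^n_j).
\]

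The key steps, in order, are as follows.

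\begin{enumerate}[(i)]
\item Identify $M_j = \{j-1, j\}$, where $j$ is the small cut cell and $j-1$ is its left neighbor. Recall that $S_j$ acts as the identity off $M_j$.
\item Briefly re-derive the two formulas above from \eqref{eq:Qhat}--\eqref{eq:SRD-op}. With $V_j = \alpha h$ and $V_{j-1} = h$, the weighted average $\hat Q^*_j$ is a convex combination of $U^*_{j-1}$ and $U^*_j$. The small cell $j$ has $W_j = \{j\}$, so it receives the full average $\hat Q^*_j$. The neighbor $j-1$ keeps a fraction of its own value and takes the remaining fraction from the average, with the weights chosen as in Lemma~\ref{lem:TVD-SRD}.
\item Use $0 < \alpha < 1$, which gives $1-\alpha > 0$. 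Then the second component equals $(1-\alpha)(U^n_{j-1} - U^n_j)$, which is nonzero because $U^n_{j-1} \ne U^n_j$.
\item Conclude that $U^{n+1}_j \ne U^n_j$, and hence $U^{n+1} \ne U^n$.
\end{enumerate}

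The only delicate point is step (ii): checking that the weights really give a nonzero multiple of the jump at cell $j$. Cell $j$ alone suffices and does not depend on the precise weight at $j-1$. Cell $j$ is replaced by the neighborhood average $\hat Q^*_j$. This average is a strictly convex combination with positive weight $w_{j-1,j}V_{j-1}/\hat V_j$ on $U^n_{j-1}$, so it differs from $U^n_j$ whenever the two values differ.

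This also shows that the neighborhood's conserved total is redistributed rather than changed, in agreement with Proposition~\ref{prop:conservation}. A steady state of $B$ is nonetheless not a fixed point of $S_j$ unless it is constant across $M_j$.
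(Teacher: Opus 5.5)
Your proof is correct and uses the same mechanism as the paper's: the merged average is a strict convex combination of two unequal values, so any cell that receives it in full must change. The difference is which cell you test, and your choice is the more robust one.

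The paper tests cell $j-1$ and asserts that SRD assigns $\hat{Q}^*$ to both cells. That holds for plain merging, where $\hat{Q}^* = (U^n_{j-1}+\alpha U^n_j)/(1+\alpha)$. This is what the fraction displayed in the paper's proof actually equals, rather than $(1-\alpha)U^n_{j-1}+\alpha U^n_j$, and it is the $1.6\overline{6}$ of Experiment~6. It does not hold for weighted SRD with $w_{j-1,j-1}=\alpha$. There $\hat V_j = h$ and $\hat Q^*_j = (1-\alpha)U^n_{j-1}+\alpha U^n_j$, but cell $j-1$ receives only $\alpha U^n_{j-1} + (1-\alpha)\hat Q^*_j$. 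You test the small cell $j$ instead, which needs only $W_j=\{j\}$ and $w_{j-1,j}>0$. Your argument is therefore valid under either reading of ``standard SRD.''

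One correction to step (ii). If you actually re-derive the quoted formulas with $w_{j-1,j-1}=\alpha$, the $(j-1)$ component comes out as $\alpha(1-\alpha)(U^n_j-U^n_{j-1})$, not $\alpha(U^n_j-U^n_{j-1})$. The former is exactly what conservation forces given the $j$ component. You never use that component, so the proof is unaffected, but you should claim only the second formula.
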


\begin{proof}
The neighborhood average over the merged cell $\{j-1, j\}$ is
\[
  \hat{Q}^* = \frac{h U_{j-1}^n + \alpha h U_j^n}{h + \alpha h}
  = (1-\alpha) U_{j-1}^n + \alpha U_j^n.
\]
SRD assigns $\hat{Q}^*$ to both cells, so $U_{j-1}^{n+1} = U_j^{n+1} = 
\hat{Q}^* \ne U_{j-1}^n$ whenever $U_{j-1}^n \ne U_j^n$.
\end{proof}

UM-SRD does not share this defect. When the finite-volume update vanishes, 
$\eta_j = 0$, $s_j = 0$, and $R_j(0) = \mathrm{Id}$, so $U^{n+1} = U^n$ 
exactly.

\begin{corollary}[Steady-state error under repeated SRD]
\label{cor:steady-accumulation}
Let $\varepsilon_i = U_i^0 - u(x_i)$ denote the initial discretization 
error for a smooth steady state. Under repeated application of SRD with 
zero flux updates,
\[
  U_{j-1}^N - u(x_{j-1})
  = U_j^N - u(x_j)
  = (1-\alpha)\varepsilon_{j-1} + \alpha\varepsilon_j.
\]
\end{corollary}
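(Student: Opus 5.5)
The plan is to iterate the SRD map on the two-cell neighborhood $\{j-1,j\}$ and show that after the first application the state is a fixed point. Because every flux update vanishes, the base step satisfies $U^* = U^n$ at every step. Each step of standard SRD is therefore just one application of the redistribution operator $S_j$ to the current state.

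\textbf{First application.} By the computation in the proof of Proposition~\ref{prop:srd-steady},
\[
  U^{1}_{j-1} = U^{1}_j = \hat{Q}^{*,0} = (1-\alpha)U^0_{j-1} + \alpha U^0_j,
\]
with $\hat Q^{*,0}$ the neighborhood average at step $0$. All cells outside $M_j$ are left unchanged, since $S_j$ acts as the identity there.

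\textbf{Idempotence.} Next I would show that $S_j$ fixes any state that is constant on $M_j$. If $U_{j-1} = U_j = c$, then the neighborhood average is $((1-\alpha)+\alpha)c = c$, and SRD assigns $c$ back to both cells. By induction, $U^N_{j-1} = U^N_j = (1-\alpha)U^0_{j-1} + \alpha U^0_j$ for every $N \ge 1$.

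\textbf{Error form.} Substitute $U^0_i = u(x_i) + \varepsilon_i$. This gives
\[
  U^N_{j-1} - u(x_{j-1}) = (1-\alpha)\varepsilon_{j-1} + \alpha\varepsilon_j + \alpha\bigl(u(x_j) - u(x_{j-1})\bigr).
\]
The analogous expression for cell $j$ carries the extra term $(1-\alpha)\bigl(u(x_{j-1}) - u(x_j)\bigr)$.

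\textbf{Main obstacle.} These extra terms vanish only when $u(x_{j-1}) = u(x_j)$, so the stated identity holds exactly only in that case. I would make this explicit in one of two ways. The first is to read $u(x_{j-1})$ and $u(x_j)$ as the common value $\hat u = (1-\alpha)u(x_{j-1}) + \alpha u(x_j)$, the neighborhood average of the exact steady state. The second is to record the discrepancy as an additional $\mathcal{O}(h)$ term: for smooth $u$ it is $\mathcal{O}(h\,|u'|)$. Either way, the essential conclusion stands: the error freezes after one step at a state-independent value, and further steps neither accumulate nor remove it.
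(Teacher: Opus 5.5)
Your skeleton is the paper's own proof: one SRD step sets both cells of $M_j=\{j-1,j\}$ to the neighborhood average, $S_j$ fixes any state that is constant on $M_j$, and you then subtract $u(x_i)$. Your explicit subtraction is correct. It exposes what the paper's closing sentence passes over: the identity requires $u(x_{j-1})=u(x_j)$. However, you do not need to reinterpret or weaken the statement. The corollary sits in Section~\ref{sec:analysis-1D}, whose standing model is $u_t+au_x=0$ with $a>0$. A smooth steady state there satisfies $au_x=0$, so it is constant on the connected domain. Hence $u(x_{j-1})=u(x_j)$ automatically, and your extra terms $\alpha\bigl(u(x_j)-u(x_{j-1})\bigr)$ and $(1-\alpha)\bigl(u(x_{j-1})-u(x_j)\bigr)$ vanish identically. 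That one line is the missing step, and it is also what makes the paper's ``subtract from both sides'' legitimate. Your $\mathcal{O}(h)$ correction matters only for a non-constant profile frozen by the artificial hypothesis $U^*=U^n$, as in Experiments~6 and~8, and such a profile is not a steady state of the model equation.

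The more serious problem is your first step, which you import from the proof of Proposition~\ref{prop:srd-steady}; the paper's proof of the corollary uses the same formula. In fact $\frac{hU_{j-1}+\alpha hU_j}{h+\alpha h}=\frac{U_{j-1}+\alpha U_j}{1+\alpha}$, which differs from $(1-\alpha)U_{j-1}+\alpha U_j$ by $\frac{\alpha^2}{1+\alpha}(U_j-U_{j-1})$. That difference is nonzero exactly in the case of interest. Conservation (Proposition~\ref{prop:conservation}) decides which value is right: any conservative map that makes the two cells equal must give them the common value $\frac{U_{j-1}+\alpha U_j}{1+\alpha}$. This is the value $1.6\overline{6}=(1.5+0.2\cdot 2.5)/1.2$ reported in Experiment~6, whereas $(1-\alpha)U_{j-1}+\alpha U_j=1.7$ there. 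You cannot rescue the formula with the weights $w_{j-1,j-1}=\alpha$, $w_{j-1,j}=1-\alpha$ of Lemma~\ref{lem:TVD-SRD} either. They do give $\hat V_j=h$ and $\hat Q_j=(1-\alpha)U_{j-1}+\alpha U_j$, but by \eqref{eq:SRD-op} cell $j-1$ then receives $\alpha U_{j-1}+(1-\alpha)\hat Q_j\neq\hat Q_j$. The jump $U_j-U_{j-1}$ is only multiplied by $\alpha^2$ per step, so your one-step fixed-point argument breaks. With the correct conservative average, your argument plus the constant-steady-state observation proves $U^N_{j-1}-u(x_{j-1})=U^N_j-u(x_j)=\frac{\varepsilon_{j-1}+\alpha\varepsilon_j}{1+\alpha}$ for all $N\ge 1$. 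The stated coefficients $1-\alpha$ and $\alpha$ agree with these only to first order in $\alpha$, so the statement itself, not just your proof, needs adjusting.
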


\begin{proof}
With $U^* = U^n$, the neighborhood average at each step is 
$\hat{Q}^* = (1-\alpha)U_{j-1}^n + \alpha U_j^n$. After one SRD step, 
both cells satisfy $U_{j-1}^1 = U_j^1 = \hat{Q}^*$, and since the flux 
update remains zero, every subsequent step leaves this value unchanged. 
Subtracting the steady state $u(x_i)$ from both sides gives the result.
\end{proof}
\section{Numerical Experiments}\label{sec:numerics}

All experiments use $u_t + au_x = 0$ ($a > 0$) with one small cut cell of 
volume fraction $0 < \alpha < 1/2$ merged left and a first-order upwind base 
scheme. We compare the base upwind scheme, weighted SRD~\cite{berger2024}, 
and UM-SRD with default parameters $p=2$, $\tau=0.1$, $\varepsilon=10^{-14}$. 
UM-SRD adds $\mathcal{O}(|M_j|)$ operations per neighborhood per step; 
wall-clock overhead was less than 5\% relative to SRD in all 1D experiments.

\subsection{Experiment 1: Smooth Advection and Convergence}
\label{sec:exp-smooth}

On the domain $[0,1]$ with periodic boundary conditions, initial condition 
$u(x,0) = \sin(2\pi x)$, small cut cell at $x = 0.5$, and final time $T=1$, 
Table~\ref{tab:exp1_errors} reports $L^1$ and $L^\infty$ errors for UM-SRD 
at four grid levels. The base upwind scheme and weighted SRD produce nearly 
identical errors. Convergence rates approach first order ($0.86 \to 0.93 \to 
0.97$ in $L^1$), and UM-SRD and weighted SRD agree to within 0.01\%, 
confirming the blending does not degrade accuracy.

\begin{table}[ht]
\centering
\caption{$L^1$ and $L^\infty$ errors and convergence rates for UM-SRD,
Experiment~1. $N$ = number of full cells.}
\label{tab:exp1_errors}
\begin{tabular}{cccccc}
\hline
$N$ & $h$ & $E^{(1)}$ & $p^{(1)}$ & $E^{(\infty)}$ & $p^{(\infty)}$ \\
\hline
 40 & 0.0250 & 2.438e-01 & ---  & 3.839e-01 & --- \\
 80 & 0.0125 & 1.341e-01 & 0.86 & 2.088e-01 & 0.88 \\
160 & 0.0063 & 7.027e-02 & 0.93 & 1.092e-01 & 0.93 \\
320 & 0.0031 & 3.592e-02 & 0.97 & 5.609e-02 & 0.96 \\
\hline
\end{tabular}
\end{table}

\begin{figure}[htbp]
  \centering
  \includegraphics[width=0.6\textwidth]{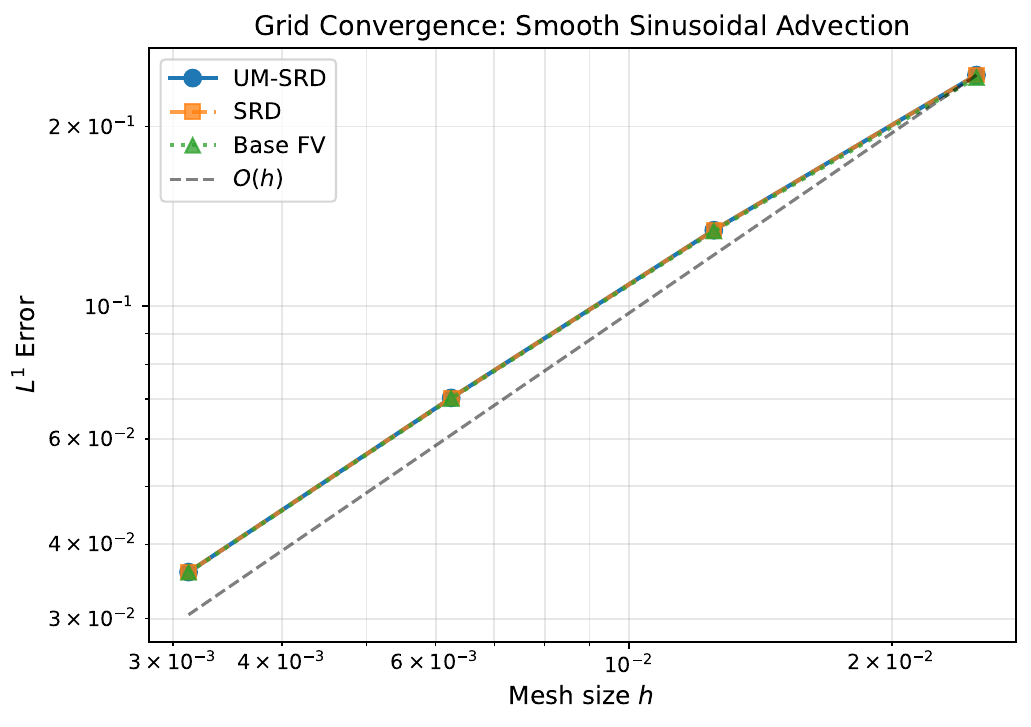}
  \caption{Experiment~1: $L^1$ convergence for UM-SRD, weighted SRD, and the
  base upwind scheme. All three are indistinguishable.}
  \label{fig:convergence}
\end{figure}

\FloatBarrier
\subsection{Experiment 2: Shut-off Behaviour}
\label{sec:exp-shutoff}

A compact cosine pulse with center $x_c = 0.10$ and half-width $w/2 = 0.04$ 
is advected past a small cell at $x = 0.5$ with $\alpha = 0.2$, $N = 200$ 
cells, and two CFL values (0.5 and 0.01). Figure~\ref{fig:sj-time} shows 
that $s_j^n = 0$ exactly before the pulse reaches the small cell ($t \lesssim 
0.14$), rises to $\approx 0.99$ while the pulse crosses, and returns to zero 
after the trailing edge exits ($t \gtrsim 0.85$). Both CFL cases produce 
identical $s_j^n$ histories because $\eta_j$ depends only on whether updates 
are nonzero, not on their magnitude.

\begin{figure}[htbp]
  \centering
  \includegraphics[width=0.85\textwidth]{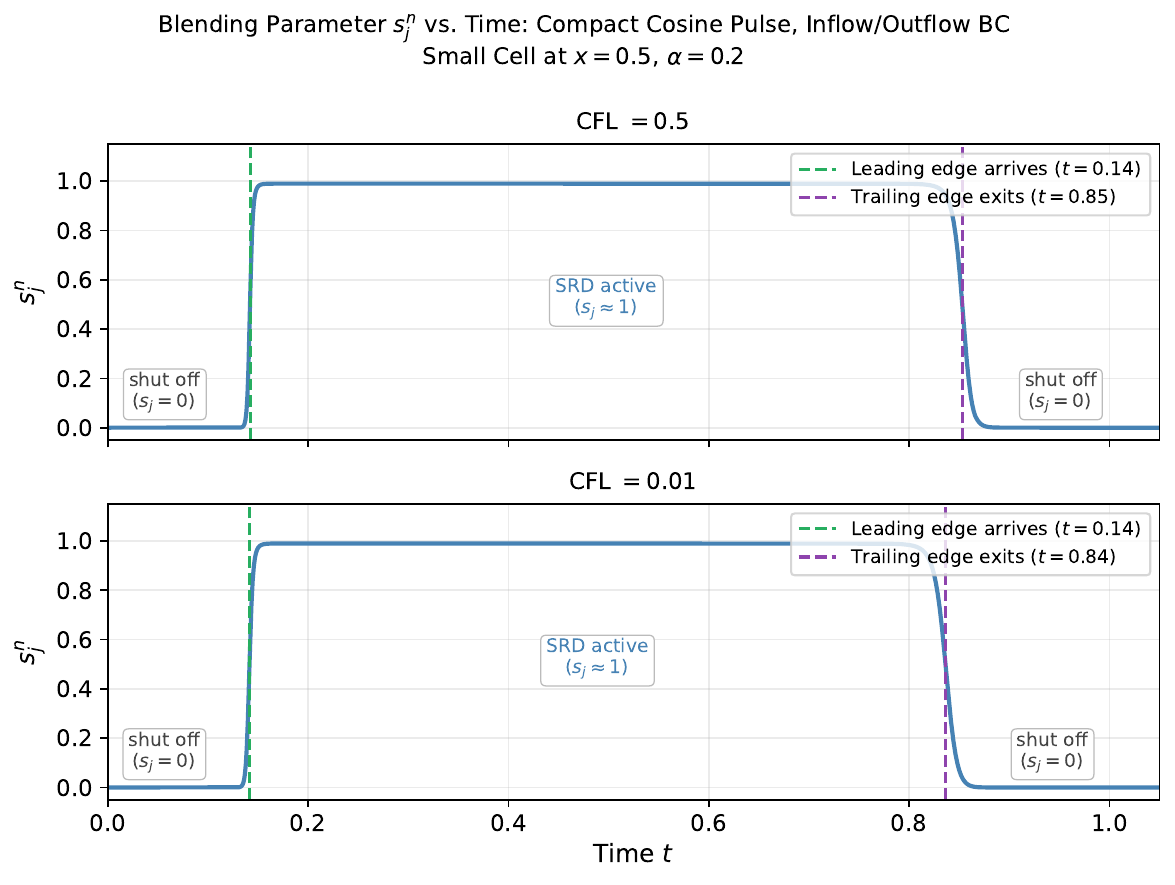}
  \caption{Experiment~2: time history of $s_j^n$ for CFL\,$=0.5$ (top) and
  CFL\,$=0.01$ (bottom). Both cases produce identical $s_j^n$ histories,
  confirming the binary nature of the normalized indicator. Update magnitudes
  differ by a factor of fifty, consistent with $|\Delta U| = \mathcal{O}(\Delta t)$.}
  \label{fig:sj-time}
\end{figure}

\FloatBarrier
\subsection{Experiment 3: Discontinuous Data and TVD Behaviour}
\label{sec:exp-step}

With step initial condition $u(x,0) = \mathbf{1}_{x < x_0}$ and the small 
cut cell near $x_0$, all three methods remain non-oscillatory throughout the 
simulation, consistent with Theorem~\ref{thm:TVD-UMSRD}. UM-SRD closely 
matches weighted SRD near the small cell ($s_j \approx 1$ while the 
discontinuity crosses) and introduces no new extrema. The profiles are shown 
in Figure~\ref{fig:step-profile}.

\begin{figure}[htbp]
  \centering
  \includegraphics[width=0.6\textwidth]{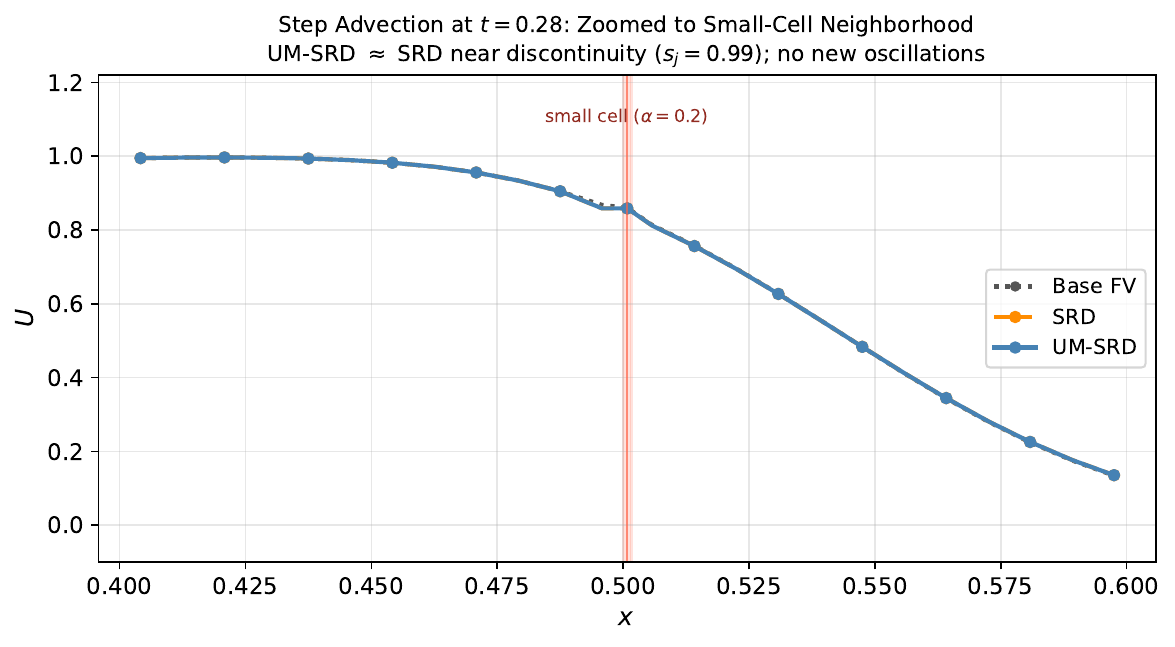}
  \caption{Experiment~3: cell-average profiles at the small-cell neighbourhood
  for the step test. All three methods are non-oscillatory; UM-SRD and SRD
  are visually indistinguishable.}
  \label{fig:step-profile}
\end{figure}

\FloatBarrier
\subsection{Experiment 4: Parameter Sensitivity}
\label{sec:exp-params}

Table~\ref{tab:param-sensitivity} reports $L^1$ errors at $N = 160$ for six 
$(p, \tau)$ combinations. Errors vary by less than 0.02\% across all tested 
values, confirming that the method is insensitive to the choice of blending 
parameters within the tested range. The default values $p = 2$ and $\tau = 
0.1$ provide a smooth transition and are used in all other experiments.

\begin{table}[ht]
  \centering
  \caption{$L^1$ errors for Experiment~4 at $N=160$.}
  \label{tab:param-sensitivity}
  \begin{tabular}{ccc}
    \hline
    $(p,\tau)$ & $L^1$ error & order \\
    \hline
    $(1,0.05)$ & 7.026e-02 & $\approx 1$ \\
    $(1,0.10)$ & 7.026e-02 & $\approx 1$ \\
    $(2,0.05)$ & 7.027e-02 & $\approx 1$ \\
    $(2,0.10)$ & 7.027e-02 & $\approx 1$ \\
    $(4,0.05)$ & 7.027e-02 & $\approx 1$ \\
    $(4,0.10)$ & 7.027e-02 & $\approx 1$ \\
    \hline
  \end{tabular}
\end{table}

\FloatBarrier
\subsection{Experiment 5: Small-Cell Instability and Stabilization}
\label{sec:exp-instability}

With $N = 100$, $\alpha = 0.05$, $a = 1$, and $\Delta t = 0.5h/a$, the 
local CFL at the small cell is 10. The base upwind scheme diverges: 
$\max|U^n|$ grows by a factor of $\approx 9$ per step, reaching $1.28 \times 
10^6$ after eight steps. Both weighted SRD and UM-SRD remain bounded with 
$\max|U| \approx 0.91$ after 200 steps, as shown in 
Figure~\ref{fig:instability}. This test isolates the stabilization mechanism: 
when the update at the small cell is large, $s_j \approx 1$ and UM-SRD 
reduces to standard SRD, inheriting its stability properties.

\begin{figure}[htbp]
  \centering
  \includegraphics[width=0.85\textwidth]{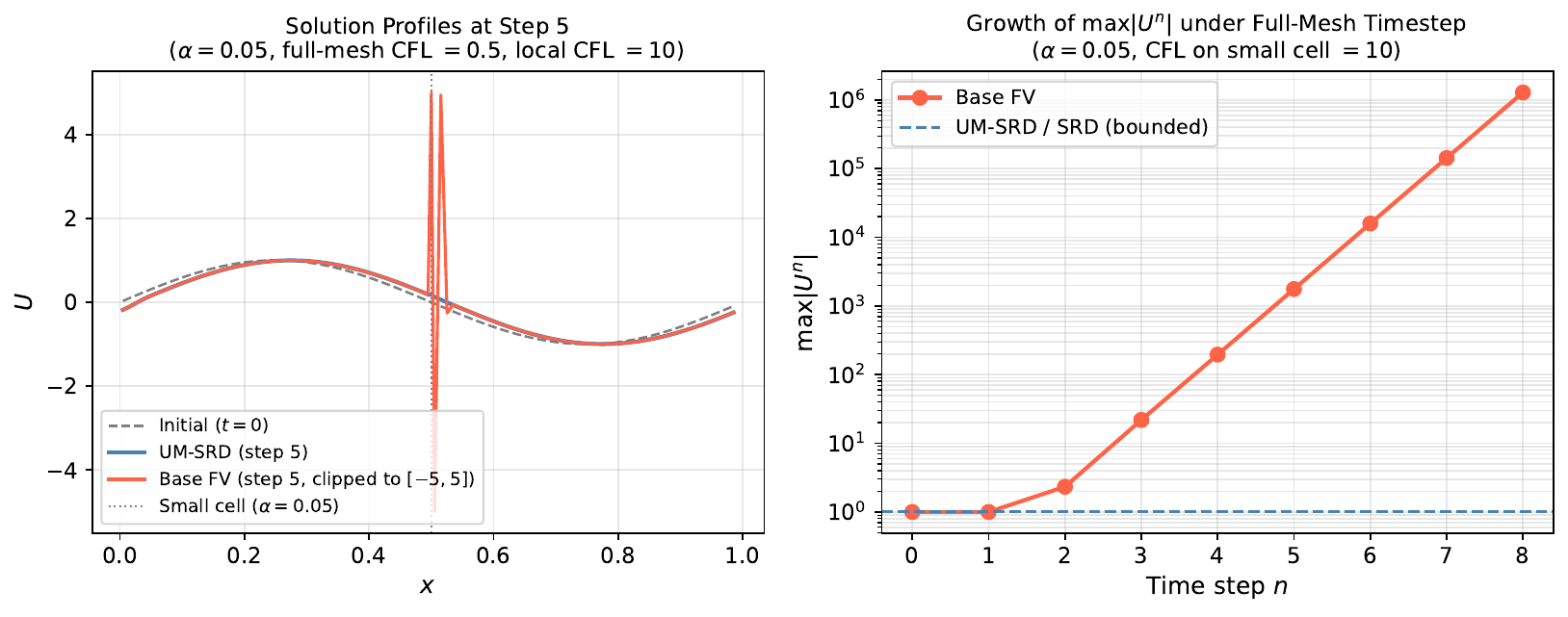}
  \caption{Experiment~5: solution profiles at step~5 (left) and growth of
  $\max|U^n|$ (right) under the full-mesh timestep with local CFL\,$=10$.
  The base scheme diverges; UM-SRD and SRD remain bounded throughout.}
  \label{fig:instability}
\end{figure}

\FloatBarrier
\subsection{Experiment 6: Steady-State Preservation}
\label{sec:exp-steady}

With $N = 50$, $\alpha = 0.2$, and a small cell at $j = 25$, we initialize 
$U_{j-1} = 1.5$, $U_j = 2.5$, and zero elsewhere, then apply redistribution 
with $U^* = U^n$ at every step, simulating a zero flux balance. After the 
first step, standard SRD sets $U_{j-1} = U_j = \hat{Q}^* = 
1.6\overline{6}$, giving $\max|U^1 - U^0| = 8.33 \times 10^{-1}$, and this 
modified profile persists at every subsequent step. UM-SRD produces 
$\max|U^k - U^0| = 0$ to machine precision at every step because $U^* = U^n$ 
forces $\eta_j = 0$ and hence $s_j = 0$ exactly. Figure~\ref{fig:steady} 
illustrates the contrast, confirming Proposition~\ref{prop:srd-steady}.

\begin{figure}[htbp]
  \centering
  \includegraphics[width=0.65\textwidth]{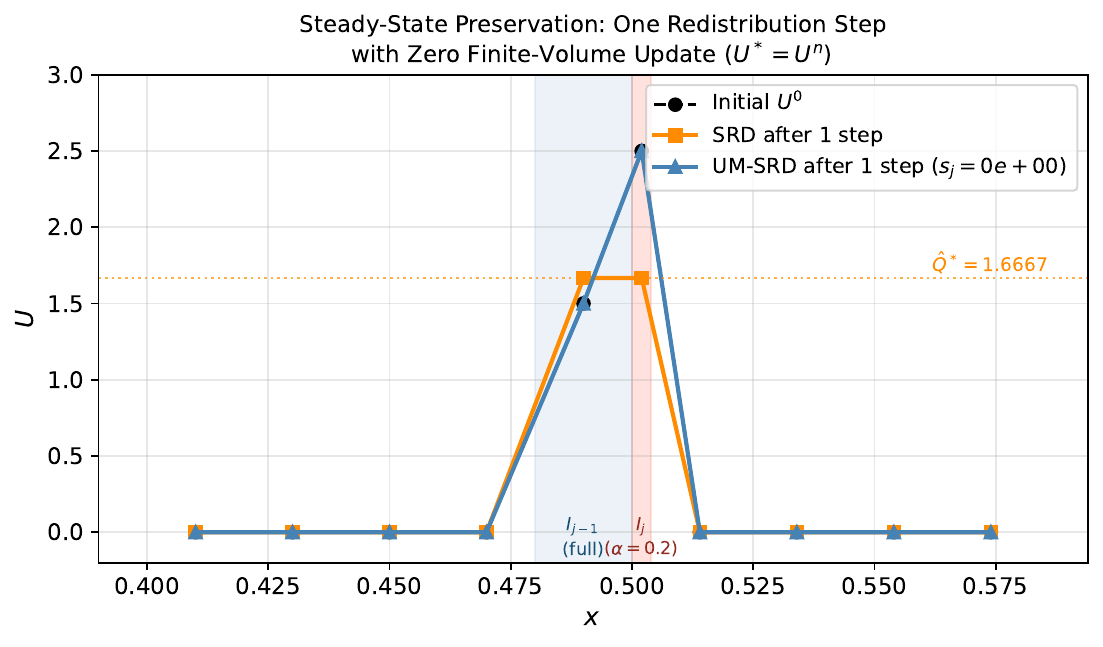}
  \caption{Experiment~6: neighbourhood profiles after one redistribution step
  with zero flux balance. SRD flattens the gradient to
  $\hat{Q}^* = 1.6\overline{6}$; UM-SRD leaves the profile unchanged
  ($s_j = 0$ exactly).}
  \label{fig:steady}
\end{figure}

\FloatBarrier
\subsection{Experiment 7: Two-Dimensional Smooth Advection}
\label{sec:exp-2d}

On the domain $[0,1]^2$ with a Cartesian grid, one cut cell at $(N/2, N/2)$ 
has its height reduced to $\alpha h = 0.2h$ by a horizontal embedded 
boundary and merges downward. The advection velocity is $(a_x, a_y) = (1, 
0.5)$, the initial condition is $u(x,y,0) = \sin(2\pi x)\sin(2\pi y)$, and 
$T = 1$. Time integration uses Godunov dimensional splitting with an $x$-sweep 
followed by a $y$-sweep at each step, with $\Delta t = 0.4h/(a_x + a_y)$. 
The UM-SRD blending operator on $M_j$ is applied after each full step and is 
identical in form to the 1D case.

Table~\ref{tab:exp7_errors} shows that UM-SRD, weighted SRD, and the base 
scheme produce nearly identical errors, agreeing to within 0.01\%, with 
observed rates approaching first order as the mesh is refined ($0.81 \to 
0.90 \to 0.95$). The pre-asymptotic sub-unity rates are consistent with the 
known behavior of first-order upwind on smooth 2D problems. 
Figure~\ref{fig:2d-convergence} shows the convergence curves and the solution 
field at $N = 160$.

\begin{table}[htbp]
\centering
\caption{Errors and convergence rates for Experiment~7 (2D, $\alpha=0.2$,
$a_x=1$, $a_y=0.5$). Errors for UM-SRD, weighted SRD, and base FV agree
to within 0.01\%.}
\label{tab:exp7_errors}
\begin{tabular}{cccccc}
\hline
$N$ & $h$ & $E^{(1)}$ & $p^{(1)}$ & $E^{(\infty)}$ & $p^{(\infty)}$ \\
\hline
40 & 0.0250 & 1.780e-01 & ---  & 4.380e-01 & --- \\
 80 & 0.0125 & 1.015e-01 & 0.81 & 2.500e-01 & 0.81 \\
160 & 0.0063 & 5.434e-02 & 0.90 & 1.340e-01 & 0.90 \\
320 & 0.0031 & 2.814e-02 & 0.95 & 6.943e-02 & 0.95 \\
\hline
\end{tabular}
\end{table}

\begin{figure}[htbp]
  \centering
  \includegraphics[width=0.92\textwidth]{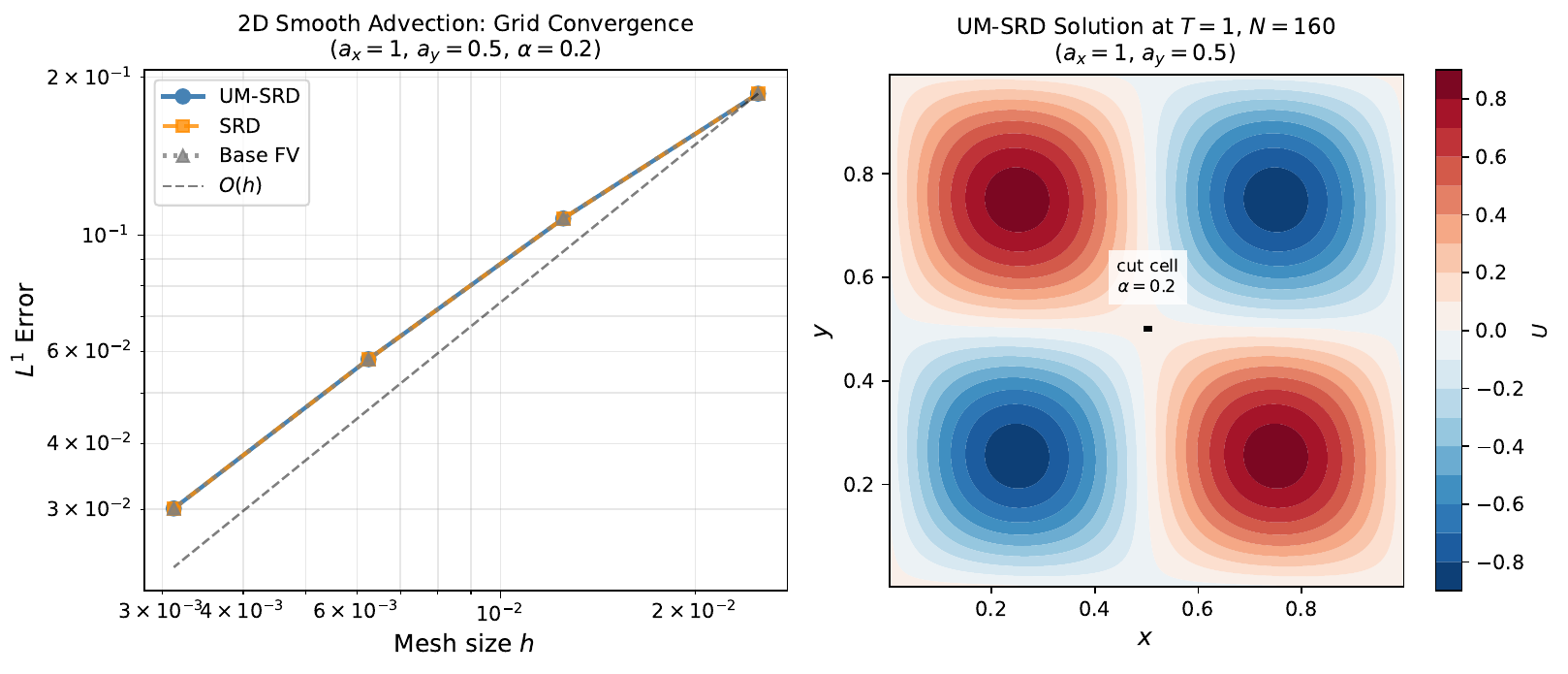}
  \caption{Experiment~7: $L^1$ convergence (left) and solution field at
  $T=1$, $N=160$ (right, cut cell outlined). All three methods are
  indistinguishable in the convergence plot.}
  \label{fig:2d-convergence}
\end{figure}

\FloatBarrier
\subsection{Experiment 8: Long-Time Near-Steady Drift}
\label{sec:exp-long-steady}

This test isolates a regime in which redistribution is not required for 
small-cell stability. With $N = 100$, $\alpha = 0.2$, and initial condition 
$U^0 = \sin(2\pi x)$, we apply redistribution with $U^* = U^n$ at every 
step, so the finite-volume update is identically zero throughout. Standard 
SRD continues to fire at every step, replacing the values in the small-cell 
neighbourhood with their volume-weighted average and accumulating a persistent 
local modification. UM-SRD produces $\|U^k - U^0\|_{L^1} = 0$ to machine 
precision at every step because $U^* = U^n$ forces $\eta_j = 0$ and hence 
$s_j = 0$ exactly.

Figure~\ref{fig:long-steady} shows the cumulative $L^1$ drift from the 
initial state as a function of redistribution step (left panel) and the 
solution profiles near the cut-cell neighbourhood at step $k = 5000$ (right 
panel). SRD flattens the local gradient toward the neighbourhood average; 
UM-SRD leaves the profile unchanged.

\begin{figure}[htbp]
  \centering
  \includegraphics[width=0.92\textwidth]{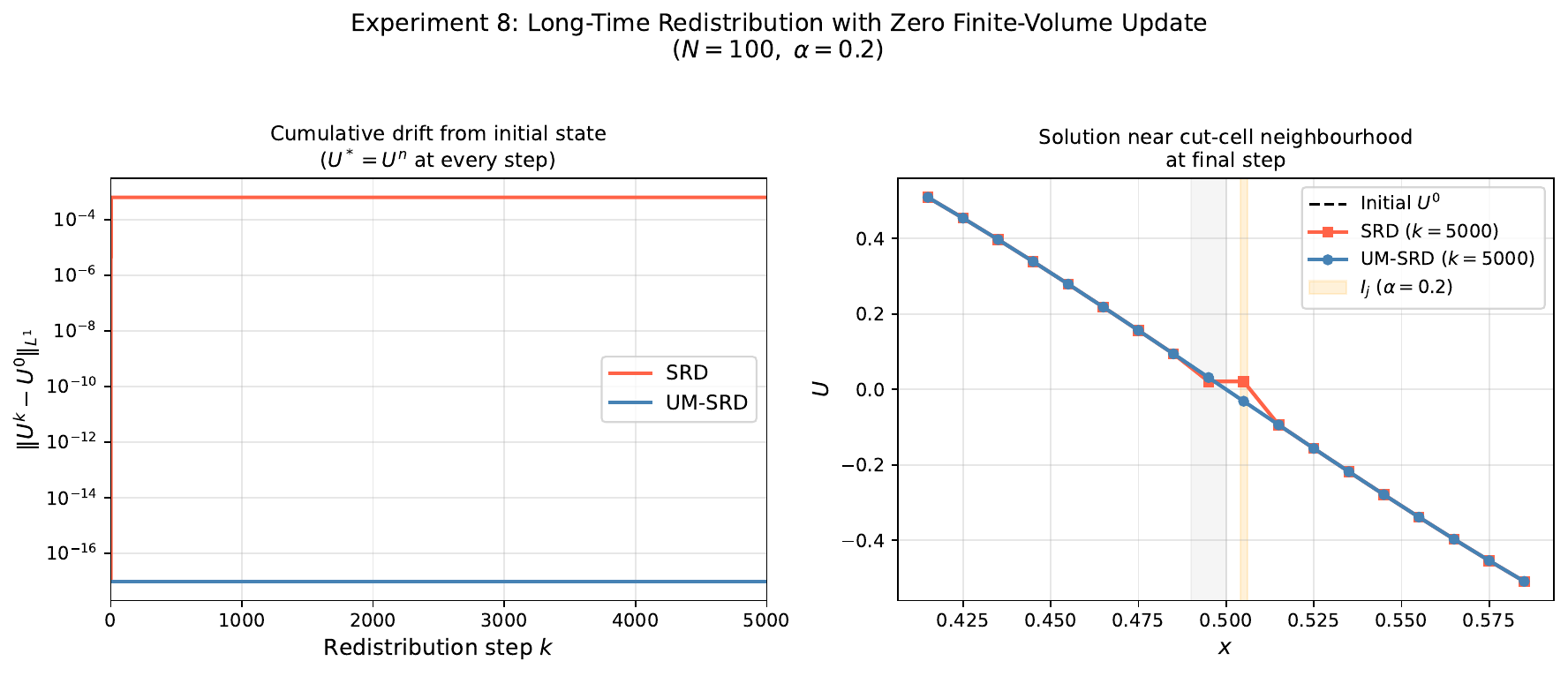}
  \caption{Experiment~8: cumulative $L^1$ drift from the initial state (left)
  and solution profiles near the cut-cell neighbourhood at $k = 5000$ (right).
  Standard SRD accumulates a persistent local modification; UM-SRD remains at
  machine precision throughout.}
  \label{fig:long-steady}
\end{figure}

\FloatBarrier
The UM-SRD blending step is dimension-independent: compute $\eta_j$, form 
$s_j$, and apply \eqref{eq:UMSRD-update}. The neighborhood topology, SRD 
weights, and flux computation are the only dimension-dependent ingredients, 
and UM-SRD inherits them unchanged from the weighted SRD implementation. 
Conservation and the shut-off property hold in any dimension by construction; 
extending the TVD proof to general multidimensional neighborhood topologies 
is left for future work.

\subsection{Experiment 9: Long-Time Advection Past a Tilted Embedded Boundary}
\label{sec:exp-long-active}

This experiment examines UM-SRD behavior over long simulation times in a 
geometry with a non-trivial embedded boundary. The domain is $[0,1]^2$ with 
a tilted embedded boundary defined by $y = 0.5 + 0.3(x - 0.5)$, producing 
96 cut cells with volume fractions ranging from 0.05 to 0.95. The advection 
velocity is $a = 1$ in $x$, the initial condition is $u(x,y,0) = 
\sin(2\pi x)\cos(2\pi y)$, and the exact solution is $u(x,y,t) = 
\sin(2\pi(x-t))\cos(2\pi y)$. We run to $T = 10$ under two CFL values: 
$0.4$ (fully active regime) and $0.005$ (near-steady regime).

In both cases, UM-SRD and SRD produce identical $L^1$ errors throughout the 
simulation, as shown in Figure~\ref{fig:exp9}. This is consistent with the 
analysis: with continuously evolving fluxes, the normalized indicator 
satisfies $s_j \approx 1$ at every cut cell at every step, and UM-SRD 
reduces to standard SRD by construction. The shut-off mechanism does not 
activate in this regime because the finite-volume update never vanishes. 
This experiment confirms that UM-SRD introduces no overhead or degradation 
in fully active advection, and that the separation between UM-SRD and SRD 
is isolated to the zero-update regime demonstrated in Experiments 6 and 8.

\begin{figure}[htbp]
  \centering
  \includegraphics[width=0.92\textwidth]{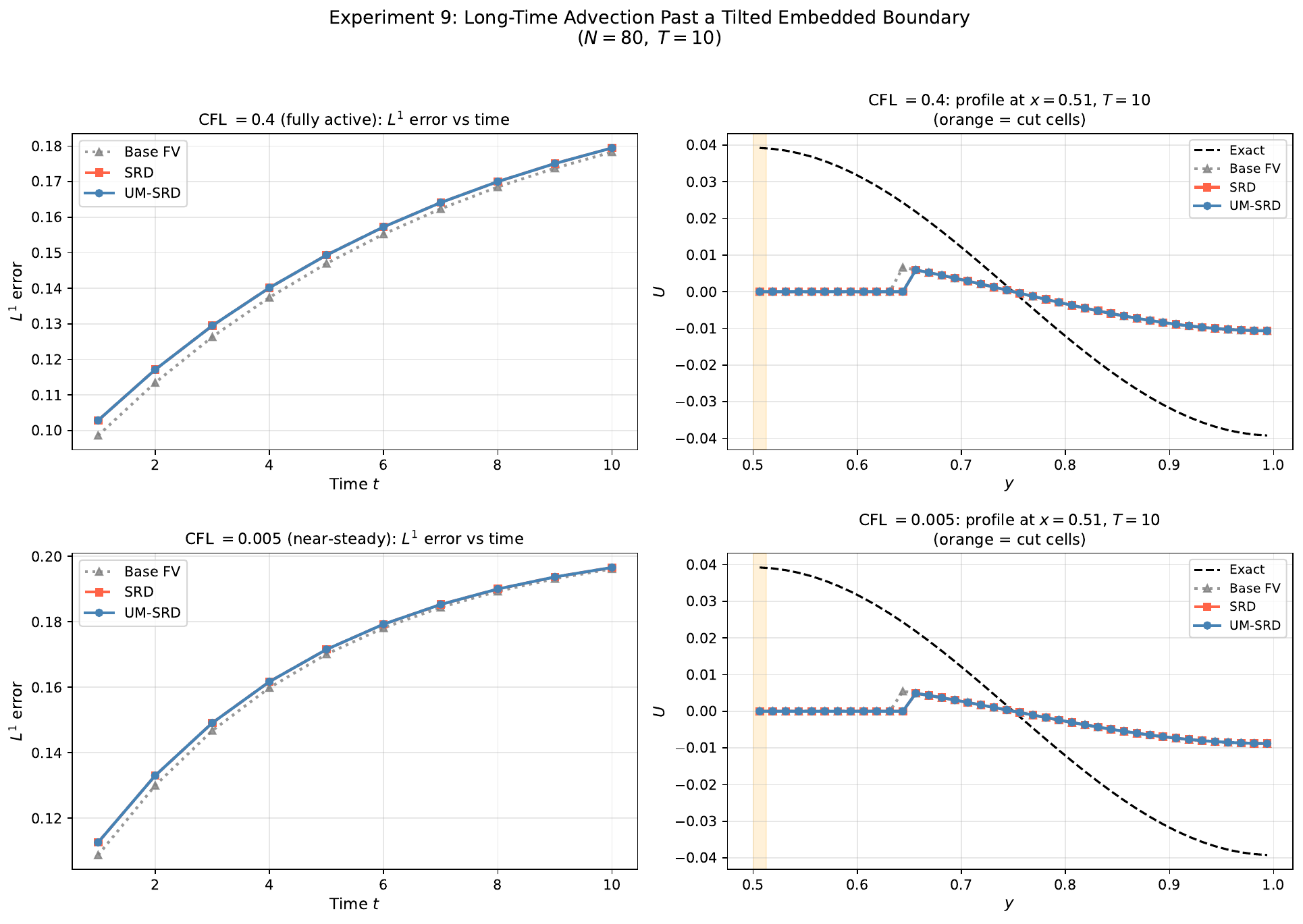}
  \caption{Experiment~9: $L^1$ error vs time (left panels) and solution
  profiles at $T=10$ (right panels) for CFL\,$=0.4$ (top) and
  CFL\,$=0.005$ (bottom). UM-SRD and SRD produce identical results in
  both regimes, consistent with $s_j \approx 1$ throughout when
  finite-volume updates are nonzero.}
  \label{fig:exp9}
\end{figure}

\FloatBarrier
\section{Algorithmic Realization}\label{sec:algorithm}

Algorithm~\ref{alg:umsrd} gives the complete UM-SRD time step. It is 
conservative by construction (Proposition~\ref{prop:conservation}), TVD on 
the 1D model problem (Theorem~\ref{thm:TVD-UMSRD}), and reduces exactly to 
the base scheme when $\Delta U_i = 0$ for all $i \in M_j$.

\begin{algorithm}
\caption{UM-SRD Update}\label{alg:umsrd}
\begin{algorithmic}[1]
\REQUIRE $U^n$, mesh, neighborhoods $\{M_j\}$, weights $\{w_{i,j}\}$,
         parameters $(p,\tau,\varepsilon)$
\ENSURE $U^{n+1}$
\STATE \textbf{Finite-volume update:} $U^*_i \leftarrow U^n_i - \frac{\Delta t}{V_i}\sum_{f}F_{i,f}$ for each $i$
\FOR{each neighborhood $j$}
  \STATE $\Delta U_{\max} \leftarrow \max_{i\in M_j}\|\Delta U_i\|_2$,\;
         $\eta_j \leftarrow \Delta U_{\max}/(\varepsilon+\Delta U_{\max})$,\;
         $s_j \leftarrow \eta_j^p/(\eta_j^p+\tau^p)$
  \STATE $\hat{V}_j \leftarrow \sum_{i\in M_j}w_{i,j}V_i$,\;
         $\hat{Q}^*_j \leftarrow \hat{V}_j^{-1}\sum_{i\in M_j}w_{i,j}V_iU^*_i$
\ENDFOR
\FOR{each cell $i$}
  \IF{$i\in M_j$ for some $j$}
    \STATE $U^{n+1}_i \leftarrow (1-s_j)U^*_i + s_j\sum_{k\in W_i}w_{i,k}\hat{Q}^*_k$
  \ELSE
    \STATE $U^{n+1}_i \leftarrow U^*_i$
  \ENDIF
\ENDFOR
\end{algorithmic}
\end{algorithm}

\section{Conclusion}

UM-SRD extends weighted SRD with a local, update-dependent blending 
parameter that reduces the redistribution operator to the identity when the 
finite-volume update vanishes. For a 1D model problem with a single small cut 
cell, the method is provably TVD under the same CFL condition as the base 
upwind scheme, conservative by construction, and exactly steady-state 
preserving. With an unnormalized indicator the perturbation to the local 
truncation error is higher-order than the base scheme; with the normalized 
indicator, which is preferred in practice for its exact machine-precision 
shut-off, first-order accuracy is preserved directly.

In fully active advection, where the finite-volume update is nonzero at 
every step, the normalized indicator satisfies $s_j \approx 1$ throughout 
and UM-SRD reduces to standard SRD. The shut-off mechanism activates only 
when updates genuinely vanish, as demonstrated by the steady-state 
preservation and long-time drift experiments. The practical benefit of 
UM-SRD relative to SRD is therefore concentrated in near-steady or 
quasi-steady flow regimes, where SRD continues to apply redistribution 
even where no stabilization is needed and UM-SRD does not.

Numerical experiments confirm these properties and illustrate two behaviors 
that distinguish UM-SRD from standard SRD: exact shut-off when the update 
vanishes, demonstrated by both the steady-state preservation test and the 
long-time drift experiment where SRD accumulates $\mathcal{O}(1)$ local 
modification while UM-SRD remains at machine precision; and full activation 
when stabilization is needed, demonstrated by the small-cell instability test 
where both methods remain bounded under a local CFL of 10 while the base 
scheme diverges in eight steps.

The analysis is restricted to a single small cell in 1D with monotone 
merging geometry. Extending the TVD and convergence theory to multiple small 
cells, general merging patterns, and multidimensional neighborhood 
topologies remains open, as does a rigorous stability proof for the 
full-mesh timestep regime. Several further properties are not addressed here 
and remain open: whether rapid temporal switching of $s_j$ between zero and 
one can introduce instabilities near marginally resolved features, whether 
UM-SRD preserves invariant domains or positivity for nonlinear systems, and 
whether partial activation near discontinuities can produce neighborhood-scale 
artifacts. These are natural directions for future work. Practically, UM-SRD 
adds only a thin layer of local blending logic to any existing weighted SRD 
implementation and inherits its neighborhood and weight structure unchanged.
\section*{Acknowledgments}

The author thanks Marsha Berger and Andrew Giuliani for correspondence
regarding the stability mechanism of the blending operator and for
helpful comments on an early version of this work. The author used Claude (Anthropic) for assistance with manuscript 
editing, LaTeX formatting, and Python code for the numerical experiments.


\end{document}